\date{}
\newtheorem{thm}{Theorem}[section]
\newtheorem{lem}[thm]{Lemma}
\newtheorem{prop}[thm]{Proposition}
\newtheorem{alg}[thm]{Algorithm}
\theoremstyle{definition}
\theoremstyle{definition}
\newtheorem{example}[thm]{Example}
\theoremstyle{remark}
\newcommand{\A}{\mathbf{A}}
\newcommand{\C}{\mathbb{C}}
\newcommand{\Z}{\mathbb{Z}}
\newcommand{\R}{\mathbb{R}}
\newcommand{\Q}{\mathbb{Q}}
\newcommand{\idealB}{\mathfrak{B}}
\newcommand{\idealI}{\mathfrak{T}}  
\newcommand{\idealM}{\mathfrak{M}}
\newcommand{\idealJ}{\mathfrak{J}}
\newcommand{\idealP}{\mathfrak{P}}
\begin{document}
\bibliographystyle{plain}
\title{Siphons in chemical reaction networks}
\author{Anne Shiu and Bernd Sturmfels}

\maketitle

\begin{abstract}
Siphons in a chemical reaction system are subsets of the species
that have the potential of being absent in a  steady state.
We present a characterization of minimal siphons
in terms of primary decomposition of binomial ideals,
we explore the underlying geometry, and we demonstrate
the effective computation of siphons using computer algebra software.  
This leads to a new method for determining whether 
given initial concentrations allow for various boundary steady states.

  \vskip 0.3cm

  \noindent \textbf{Keywords:} chemical
  reaction systems, siphon, steady state, monomial ideal,
  binomial ideal, primary decomposition

\end{abstract}

\section{Introduction}
In systems biology, a population model or chemical reaction system is said to be ``persistent'' if none of its species can become extinct if all species are present at the initial time.    Those subsets of the species that can be absent in steady state are called ``siphons.''  
Angeli {\em et al.}~\cite{PetriNetExtended} suggested the concept of siphons to 
 study the long-term behavior of dynamical systems that model
 chemical reactions.
 In terms of the dynamics,
                a siphon is the index set of a forward-invariant face of the positive orthant.  Any boundary steady state must lie in the interior of such a face. Hence, to investigate the trajectories, it 
        is useful to list all minimal siphons.
        The present paper offers an algebraic characterization of siphons,  and it
        shows how this  translates into a practical tool for computing siphons.
                
Following \cite{Adleman, TDS}, we represent a chemical reaction network 
as a directed graph $G$ whose nodes are labeled by monomials and whose edges correspond to reactions. A {\em siphon}
of $G$ is a non-empty subset $Z$ of the variables such that, for every
directed edge $m \rightarrow m'$ in $G$, whenever one of the variables
in the monomial $m'$ lies in $Z$ then so does at least one of the variables in~$m$.
In Section 2 we relate this definition to the description of siphons given in
\cite{PetriNetExtended, EnumSiphons}, we review the underlying dynamics, and
we discuss its meaning in terms of polyhedral geometry.
Our algebraic approach is presented in Section 3.  
Theorem~\ref{thm:Main} expresses the minimal
siphons of $G$ in terms of the primary decomposition of a binomial ideal
associated to $G$. If the directed graph $G$ is strongly connected then the ideal
encoding the minimal siphons is generated by squarefree monomials.
In Theorem~\ref{thm:SecondMain} and Algorithm~\ref{algorithm}
we explain how to compute the relevant (stoichiometrically compatible) siphons for any set of initial 
conditions. In particular, a chemical reaction system without relevant siphons has no boundary steady states, and this property is sufficient for proving persistence in many systems \cite{PetriNetExtended, SiegelMacLean}.  
  In Section 4, we demonstrate that  the relevant computations can be performed 
  effectively using computer algebra software, such as
  {\tt Macaulay 2} \cite{M2}.

In the remainder of the Introduction we present three examples
from the systems biology literature, with the aim of illustrating
our algebraic representation of chemical reaction networks
and the computation of siphons.

\begin{example} \label{ex:1} 
We consider a receptor-ligand dimer model, which is analyzed by Chavez in her thesis \cite[\S 7.2]{ChavezThesis} and by Anderson \cite[Example 4.1]{Anderson08}:
\[ 
 \begin{xy}<15mm,0cm>:
 (0,1) ="c1^3" *+!DR{A^{2}C } *{}; 
 (1,1) ="c1c2^2" *+!DL{AD} *{}; 
 (1,0) ="c2^3" *+!UL{{E}} *{}; 
 (0,0) ="c1^2c2" *+!UR{BC} *{}; 
   {\ar "c1^2c2"+(-0.15,0)*{};"c1^3"+(-0.15,0)*{}};
   {\ar "c2^3"+(0.05,0)*{};"c1c2^2"+(0.05,0)*{}};
   {\ar "c1^3"+(-0.05,0)*{};"c1^2c2"+(-0.05,0)*{}};
   {\ar "c1c2^2"+(+0.15,0)*{};"c2^3"+(+0.15,0)*{}};
   {\ar "c1^3"+(0,+0.15)*{};"c1c2^2"+(0,+0.15)*{}};
   {\ar "c1^2c2"+(0,-0.05)*{};"c2^3"+(0,-0.05)*{}};
   {\ar "c1c2^2"+(0,0.05)*{};"c1^3"+(0,0.05)*{}};
   {\ar "c2^3"+(0,-0.15)*{};"c1^2c2"+(0,-0.15)*{}};
 (0.5,1.18)="k12" *+!D{\kappa_{12}} *{};
 (0.5, 0.8)="k21" *+!D{\kappa_{21}} *{};
 (1.21,.5)="k23" *+!L{\kappa_{23}}*{};
 (0.6,.5)="k32" *+!L{\kappa_{32}}*{};
 (.5,-.3)="k34" *+U{\kappa_{34}}*{};
 (.5,.1)="k43" *+U{\kappa_{43}}*{};
 (-.33,.5)="k41" *+R{\kappa_{41}}*{};
 (.2,.5)="k14" *+R{\kappa_{14}}*{};
    \end{xy}
 \]
 Note that the reaction $A^{2}C \leftrightarrows {A} {D} $ is usually denoted by $2A+C \leftrightarrows A + D$.  
The biochemical species are as follows: the species $A$ denotes a receptor, $B$ denotes a ``dimer'' state of $A$ (two receptors joined together), and $C$ denotes a ligand that can bind either to $A$ (to form $D$) or to $B$ (to form $E$).  There are three minimal siphons, $\{A, B, E\}$, $\{A,C,E\}$, and $\{C,D,E\}$, which correspond to the minimal primes of the monomial ideal of the complexes $\langle {A}^{2}{C}, ~  {A}{D},  ~ {E}, ~ {B}{C} \rangle$.  We will return to this example in Section~\ref{sec:ComputingSiphons}. \qed
\end{example}

\begin{example}\label{ex:2}
The following enzymatic mechanism was analyzed by Siegel and
MacLean \cite{SiegelMacLean}, and also by Chavez 
 \cite[Example 4.6.1]{ChavezThesis}: 
\begin{align*}
S E ~ \leftrightarrows~   Q~ &  \leftrightarrows~  P E \\
Q   I ~ & \leftrightarrows~ R~.
\end{align*}
The species are $S$ (a substrate), $E$ (an enzyme), $P$ (a product),
$I$ (an uncompetitive inhibitor), and intermediate
complexes $Q$ and $R$.
 Here the graph consists of two strong components,
and we encode it in the binomial ideal
$\langle S  E-Q, ~ Q - P  E,  ~ Q  I-R \rangle + \langle EPQRS \rangle$.
The radical of this ideal equals
$$ \langle E,~Q,~ R \rangle \,\,\cap \,\,
 \langle I,~R,~E  S-Q,~P-S \rangle \,\, \cap \,\,
 \langle P,~Q,~R,~S \rangle ~.
 $$
 By Theorem~\ref{thm:Main}, the  minimal siphons are the variables in
 these prime ideals. Thus the minimal siphons are
   $\{ E,~Q,~R \}$, $\{ I,~ R \}$, and $\{ P,~Q,~R,~S \}$.   \qed
\end{example}

\begin{example}\label{ex:3}
Here is the network for a basic one-step conversion reaction:
\begin{align*}
S_0 E ~ & \leftrightarrows~  X ~ \rightarrow~  P E \\
P F ~& \leftrightarrows~ Y ~  \rightarrow~S_0 F~.
\end{align*}
The enzyme $E$ helps convert a
substrate $S_0$ into a product $P$, and a second enzyme $F$ reverts
the product $P$ back into the original enzyme $S_0$; these are also called ``futile cycles'' \cite{AngeliSontag06,GeoMulti}.  
Such reactions include phosphorylation and de-phosphorylation events,
and they take place in MAPK cascades. This network has 
three minimal siphons: $\{ E,~X\}$, $\{F,~Y \}$, and
$\{ P, ~S_{0}, ~X, ~Y \}$.
To see this algebraically, we form the binomial ideal
\begin{align*}
\bigl\langle E S_0-X,~X  (E P-X),~ F  P - Y, ~Y  (F  S_0-Y),~E  F  P  S_0  X  Y 
\bigr\rangle~.
\end{align*}
This ideal corresponds to  $\idealI_G$ in Theorem~\ref{thm:Main}, and it has six minimal primes:
\begin{align*}
\langle E,~X, ~ F  S_0  - Y,~  P-S_0 \rangle, \,\,
\langle F,~ Y,~ P-S_0,~  S_0 E  - X \rangle, \,\,
\langle P,~S_0,~X, ~Y \rangle , \\
\langle E,~ X, ~ Y,~ F \rangle , \quad
\langle E,~X,~P,~ Y \rangle, \quad  \text{and} \quad
\langle F,~ S_0, ~X,~ Y \rangle ~ .
\end{align*}
The three minimal siphons arise from the first three of these six primes.
\qed
\end{example}

\section{Reaction networks, siphons, and steady states}  \label{sec:siphon}

A {\em chemical reaction network} is defined by a finite labeled directed graph $G$
with $n$ vertices. The $i$-th vertex of $G$ is labeled with a monomial
$ c^{y_i} \,\,\, = \,\,\, c_1^{y_{i1}} c_2^{y_{i2}} \cdots  c_s^{y_{is}}$ in 
$s$ unknowns $c_1,\ldots,c_s$, 
and an edge $(i,j)$ is labeled by a positive parameter $\kappa_{ij}$.  This graph defines the ordinary differential equations
\begin{equation}
\label{CRN}
 \frac{d c}{dt} \quad = \quad \Psi(c) \cdot A_\kappa \cdot Y~,
 \end{equation} 
where 
$ \Psi(c) ~ = ~ \bigl( c^{y_1}, \, c^{y_2} , \, \ldots, \,c^{y_n} \bigr)$ is the row vector of
the monomials, 
$Y=(y_{ij})$ is the $n \times s$-matrix of exponent vectors of the $n$ monomials,
and $A_\kappa$ is the $n \times n$-matrix 
whose off-diagonal entries
are the $\kappa_{ij}$ and whose row sums are zero
(i.e.~minus the Laplacian of $G$).  The equations (\ref{CRN}) are those of {\em mass-action kinetics}, although the concept of a siphon is independent of the choice of kinetics.  
In order for each chemical {\em complex}  $c^{y_i}$ to be a reactant or product of at least one reaction, we assume that $G$ has no isolated points.  For a complex $c^{y_i}$ and for $a \in [s]$, we write $c_a | c^{y_i}$ (``$c_a$ divides $c^{y_i}$'') if $y_{ia}>0$; in other words the $i$-th complex contains species $a$.  If the $i$-th complex does not contain species $a$, then we write $ c_a \nmid c^{y_i} $.

A non-empty subset $Z$ of the index set $[s] := \{1,2,\ldots,s\}$
is a {\em siphon} if for all $z \in Z$ and all reactions
 $c^{y_{i}} \longrightarrow c^{y_{j}}$ with $c_{z} |  c^{y_{j}}\,$,
 there exists $a \in Z$ such that $c_{a} | c^{y_{i}}$.
Siphons were called ``semilocking sets'' in \cite{Anderson08,AndShiu09}.  Note that the set of siphons 
of $G$ does not depend on the choice of parameters $\kappa_{ij}$.
 
With any non-empty subset $Z \subset [s]$ we associate the prime ideal 
\begin{align*}
\idealP_{Z} ~ := ~ \langle \, c_{a} ~:~ a \in Z \,\rangle
\end{align*}
in the polynomial ring $\mathbb{Q}[c_{1}, c_{2}, \dots, c_{s}]$.  Recall 
(e.g.~from \cite{CLO}) that the {\em variety} of $\idealP_Z$, denoted by $V(\idealP_{Z})$, is the set of points $x \in \mathbb{R}^s$ such that $f(x)=0$ for all polynomials $f \in \idealP_Z$.  
Thus, the non-negative variety $V_{\geq 0} (\idealP_{Z})$ 
is the face of the positive orthant $\R^s_{\geq 0}$ defined by all $Z$-coordinates being zero.

\begin{prop} \label{AlgSiphon}
A non-empty subset $Z$ of $[s]$ is a siphon if and only if $V_{\geq 0} (\idealP_{Z})$
  is forward-invariant with respect to the dynamical system (\ref{CRN}).
\end{prop}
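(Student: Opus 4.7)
My plan is to translate the geometric statement ``$V_{\geq 0}(\idealP_Z)$ is forward-invariant'' into an algebraic condition on the reactions by working directly with (\ref{CRN}). The face $V_{\geq 0}(\idealP_Z)$ is the subset of $\R^s_{\geq 0}$ defined by $c_z=0$ for all $z\in Z$. Because mass-action kinetics already preserves the non-negative orthant, forward-invariance of this face is equivalent to the vector field being tangent to it, i.e.\ $\dot c_z=0$ at every point of the face, for each $z\in Z$. Hence the proposition reduces to showing that this tangency condition matches the combinatorial definition of a siphon.

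Next, I would unfold $\dot c_z$ using the structure of (\ref{CRN}). Writing out $\Psi(c)\cdot A_\kappa\cdot Y$ edge by edge (recall that $A_\kappa$ is minus the Laplacian of $G$) gives
\[
\dot c_z \;=\; \sum_{i\to j}\kappa_{ij}\,c^{y_i}\,(y_{jz}-y_{iz}),
\]
the sum ranging over directed edges of $G$. When $c$ lies on the face, $c^{y_i}=0$ unless $c^{y_i}$ avoids $Z$ in the sense that $c_a\nmid c^{y_i}$ for every $a\in Z$; and for such a $Z$-avoiding complex we have $y_{iz}=0$. Thus on the face,
\[
\dot c_z \;=\; \sum_{\substack{i\to j\\ c^{y_i}\text{ avoids }Z}}\kappa_{ij}\,c^{y_i}\,y_{jz},
\]
which is a sum of non-negative real numbers since $\kappa_{ij}>0$, $c^{y_i}\geq 0$ and $y_{jz}\geq 0$.

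Third, I would observe that pointwise vanishing on the face forces termwise vanishing. Fixing any $c$ in the relative interior of the face (so $c_a>0$ for all $a\notin Z$), each factor $\kappa_{ij}\,c^{y_i}$ with $i$ a $Z$-avoiding complex is strictly positive, so the equation $\dot c_z(c)=0$ is equivalent to $y_{jz}=0$ for every reaction $i\to j$ with $i$ $Z$-avoiding and every $z\in Z$. Taking the contrapositive over $z\in Z$, this is precisely the statement: whenever a reaction $c^{y_i}\to c^{y_j}$ has $c_z\mid c^{y_j}$ for some $z\in Z$, there must exist $a\in Z$ with $c_a\mid c^{y_i}$—which is the definition of $Z$ being a siphon. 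By continuity, the condition on the relative interior propagates to the whole face.

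The main subtlety is the first reduction: that forward-invariance really is equivalent to the identity $\dot c_z=0$ on the face, not merely $\dot c_z\geq 0$. The point is that if $\dot c_z(c_0)>0$ at some $c_0$ on the face, then the trajectory through $c_0$ leaves the face instantly, so invariance is violated; combined with the automatic non-negativity of $\dot c_z$ on the face computed above, this forces exact vanishing. Once this reduction is secured, the remaining argument is a direct bookkeeping of which monomials and exponents survive on the face, as sketched above.
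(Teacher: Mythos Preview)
Your argument is correct. The paper itself does not give a proof: it simply cites Proposition~2 of Angeli, De~Leenheer, and Sontag~\cite{PetriNetExtended} and moves on. Your approach is therefore genuinely different in that it is self-contained. You make the mechanism explicit---expanding $\dot c_z$ reaction by reaction, observing that on the face $V_{\geq 0}(\idealP_Z)$ only the $Z$-avoiding source complexes survive and contribute non-negative terms, and then reading off the siphon condition as the termwise vanishing of that sum at a relative-interior point. This has the advantage of showing exactly \emph{why} the siphon combinatorics matches the invariance geometry, whereas the paper's citation is shorter but opaque. One minor remark: your closing appeal to continuity is harmless but unnecessary, since once the siphon condition forces $y_{jz}=0$ for every edge $i\to j$ with $Z$-avoiding source, the displayed formula for $\dot c_z$ vanishes identically on the entire face, not only on its relative interior.
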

\begin{proof} This is the content of Proposition 2 in Angeli {\em et al.} \cite{PetriNetExtended}. 
\end{proof} 

In Example \ref{ex:1},  the dynamical system (\ref{CRN}) takes the explicit form
 \begin{align*}
& {d A/dt} ~=~ -(\kappa_{12} + 2 \kappa_{14}) A^{2} C + (\kappa_{21} -\kappa_{23}) A D + \kappa_{32} E +2 \kappa_{41} B C \\
& {d B/dt} ~=~ \kappa_{14} A^{2} C  -(\kappa_{41}+\kappa_{43}) B C +\kappa_{34} E \\
& {d C/dt} ~=~ -\kappa_{12} A^{2} C + \kappa_{21} A D - \kappa_{43} B C +\kappa_{34} E \\
& {d D/dt} ~=~ \kappa_{12} A^{2} C -(\kappa_{21} + \kappa_{23}) A D +\kappa_{32} E \\
& {d E/dt} ~=~ \kappa_{23} A D + \kappa_{43} B C - (\kappa_{32}+ \kappa_{34})  E~.
\end{align*}
This is a dynamical system on  $\R^5_{\geq 0}$.  Each of  the three minimal siphons $\{A,B,E\}$, $\{A,C,E\}$, and $\{C,D,E\}$ defines a two-dimensional face of $\mathbb{R}^{5}_{\geq 0}$.  For example,  $V_{\geq 0}(\idealP_{\{ A,B,E \}})$ is the face in which the coordinates $A$, $B$, and $E$ are zero and $C$ and $D$ are non-negative.  The minimality of the three siphons implies that no face of dimension three or four is forward-invariant.

We next collect some results relating siphons to 
{\em boundary steady states}, that is, non-negative
steady states of (\ref{CRN}) having at least one zero-coordinate.  
These connections are behind our interest  in computing siphons. 
See \cite{Anderson08,AndShiu09,PetriNetExtended}
 for details on how siphons relate to questions of {\em persistence} (the property that positive trajectories of (\ref{CRN}) have no accumulation points on the boundary of the orthant  $\R_{\geq 0}^s$).
 We first show that a boundary steady state necessarily lies in the relative interior of a face 
  $V_{\geq 0} (\idealP_{Z})$ indexed by a siphon $Z$.

\begin{lem}\label{lem:bdySiphon}
Fix a reaction network $G$, and let $\gamma$ be a point on the boundary of the positive orthant $\mathbb{R}_{\geq 0}^{s}$ with zero coordinate set $\,Z:= \{\, i \in [s] : \gamma_{i}=0 \, \}$.
 If $\gamma$ is a boundary steady state of (\ref{CRN}), then the index set $Z$ is a siphon.
\end{lem}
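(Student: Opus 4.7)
The plan is to verify the combinatorial siphon condition directly from the steady-state equations $\frac{dc}{dt}\big|_\gamma = 0$, exploiting the fact that on the face $V_{\geq 0}(\idealP_Z)$ every contribution to $\frac{dc_z}{dt}$ (for $z \in Z$) has a determined sign. Fix $z \in Z$ and a reaction $c^{y_i} \to c^{y_j}$ with $c_z \mid c^{y_j}$; the goal is to produce some $a \in Z$ with $c_a \mid c^{y_i}$. The case $c_z \mid c^{y_i}$ is immediate by taking $a = z$, so I would reduce to the situation $y_{iz} = 0$.

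Next I would expand the $z$-th coordinate of the vector field (\ref{CRN}) in the form
\[
\frac{dc_z}{dt} ~=~ \sum_{k \to \ell} \kappa_{k\ell}\, c^{y_k}\,(y_{\ell z} - y_{kz}),
\]
the sum ranging over the directed edges of $G$. Evaluating at $\gamma$, any summand for which $y_{kz} > 0$ automatically vanishes, since $\gamma_z = 0$ forces $\gamma^{y_k} = 0$. The surviving summands are exactly those with $y_{kz} = 0$, and each is non-negative because $y_{\ell z} - y_{kz} = y_{\ell z} \geq 0$ and $\kappa_{k\ell}, \gamma^{y_k} \geq 0$. The steady-state hypothesis therefore degenerates into
\[
0 ~=~ \sum_{\substack{k \to \ell \\ y_{kz} = 0}} \kappa_{k\ell}\, y_{\ell z}\, \gamma^{y_k},
\]
a sum of non-negative terms that must vanish term by term.

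Finally I would specialize this term-by-term vanishing to the given reaction $c^{y_i} \to c^{y_j}$, for which $y_{iz} = 0$, $\kappa_{ij} > 0$, and $y_{jz} \geq 1$. The corresponding summand $\kappa_{ij}\, y_{jz}\, \gamma^{y_i}$ must equal zero, which forces $\gamma^{y_i} = 0$. Hence there is some index $a$ with $y_{ia} > 0$ and $\gamma_a = 0$; that $a$ belongs to $Z$ and satisfies $c_a \mid c^{y_i}$, completing the verification.

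I expect no substantial obstacle: once one observes that all surviving terms in $\frac{dc_z}{dt}\big|_\gamma$ have the same (non-negative) sign, the steady-state condition automatically splits into independent vanishing conditions that deliver exactly the siphon property. The main point to be careful about is keeping the case $c_z \mid c^{y_i}$ separate from the main argument, since otherwise the inequality $y_{\ell z} - y_{kz} \geq 0$ would fail for the $i \to j$ summand and the sign-based collapse of the sum would not apply.
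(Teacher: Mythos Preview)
Your proposal is correct and follows essentially the same route as the paper's proof: both reduce the steady-state equation for $\frac{dc_z}{dt}\big|_{\gamma}$ to a sum of non-negative terms (after dropping those whose source complex contains $c_z$) and then conclude that each summand vanishes. Your write-up is in fact a bit more careful than the paper's, which in its displayed equation sums only over reactions with the fixed target $c^{y_j}$ rather than over all reactions; your full expansion over all edges $k\to\ell$ makes the non-negativity and the term-by-term vanishing completely transparent.
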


\begin{proof}
Assume that $c_{z} | c^{y_{j}}$ for some species $z\in Z$ and some complex $c^{y_{j}}$ of $G$.  Let $\mathcal{I}$ index complexes that react to $c^{y_{j}}$ but do not contain the species~$z$:
$$ \mathcal{I} \,\, := \,\,
\bigl\{ i\in [n] ~:~ c^{y_{i}} \longrightarrow c^{y_{j}} \text{ is a reaction of } G \text{ and } c_{z}\not| 
\, c^{y_{i}} \bigr\}. $$
  Then we have 
\begin{align} \label{eqn:ZeroDeriv}
 \frac{dc_{z}}{dt} \Bigr\rvert_{ c = \gamma } \quad = \quad \sum\limits_{i \in \mathcal{I}} \kappa_{ij} y_{jz} \gamma^{y_{i}}  \quad = \quad 0~, 
 \end{align}
where the second equality holds because $\gamma$ is a steady  state.  
The summands of (\ref{eqn:ZeroDeriv}) are non-negative, so we have $\gamma^{y_{i}}=0$ for all $i \in \mathcal{I}$.
Thus if $i \in \mathcal{I}$ there exists
$a_{i}\in [s]$ with $\gamma_{a_{i}}=0$ (so, $a_{i}\in Z$) and hence $c_{a_{i}}| c^{y_{i}}$.
\end{proof}

A similar result holds for boundary $\omega$-limit points (accumulation points) of a trajectory; see \cite{Anderson08, PetriNetExtended} or \cite[Theorem 2.13]{AndShiu09}.  We are interested in the
dynamics arising from some initial condition $c^{(0)} \in \mathbb{R}^{s}_{ > 0}$, so we restrict our attention to polyhedra, called {\em invariant polyhedra}, of the following form:
\begin{equation}
\label{invpoly}
 \mathcal{P}_{c^{(0)}}~:= ~ \bigl(\,c^{(0)} + L_{\rm stoi}\, \bigr)  \,\cap \, \mathbb{R}^{s}_{\geq 0}~.
 \end{equation}
Here $L_{\rm stoi}~:=~ \text{span} \{ y_{j}- y_{i} ~:~ c^{y_{i}} \rightarrow c^{y_{j}} \text{ is a reaction} \} $ is the {\em stoichiometric subspace} in $\mathbb{R}^{s}$. 
For any $c$ and any $\kappa$,
 the right hand side vector $\Psi(c) \cdot A_\kappa \cdot Y$ of
 the dynamical system (\ref{CRN}) 
  lies in $L_{\rm stoi}$, and therefore the polyhedron  $\mathcal{P}_{c^{(0)}}$ is forward-invariant with respect to (\ref{CRN}).  For any 
index set $W\subset [s]$, let 
\begin{align*}
F_{W} \,\,:= \,\, \{ \, x\in \mathcal{P}_{c^{(0)}} ~:~ x_{i}=0 \text{ if } i \in W  \}  \,\,= \,\, V_{\geq 0}(\idealP_{W}) \cap \mathcal{P}_{c^{(0)}}
\end{align*}
denote the corresponding (possibly empty) face of $\mathcal{P}_{c^{(0)}}$.  All faces of 
$\mathcal{P}_{c^{(0)}}$ have this form; see \cite[\S 2.3]{AndShiu09} for further details.  
Lemma~\ref{lem:bdySiphon} implies the following: {\em 
Given an invariant polyhedron $\mathcal{P}_{c^{(0)}}$, if all siphons $Z$ yield empty faces, $F_{Z}= \emptyset$, then $\mathcal{P}_{c^{(0)}}$ contains no boundary steady states.}  
In Theorem \ref{thm:all_not_relevant} 
we shall present an algebraic method for deciding when this happens.

We now examine the case when the  chemical reaction network
is strongly connected, i.e., between any two complexes
there is a sequence of reactions.

\begin{lem} \label{bdy=siphonStrConn}
Assume that $G$ is strongly connected. Then a point
$\gamma \in \R_{\geq 0}^s$  is a boundary steady state if and only if 
$Z = \{i \in [s] ~:~ \gamma_{i}=0 \}$ is a siphon.
\end{lem}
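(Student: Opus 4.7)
The forward implication is just a restatement of Lemma~\ref{lem:bdySiphon} and does not use strong connectedness. So the plan is to prove the reverse direction: given a siphon $Z$ and a point $\gamma \in \R_{\geq 0}^s$ with zero-coordinate set exactly $Z$, show that $\gamma$ is a steady state of (\ref{CRN}). Since $Z\neq \emptyset$, the point $\gamma$ automatically lies on the boundary.

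The strategy is to establish the stronger combinatorial claim that \emph{every complex $c^{y_i}$ of $G$ contains at least one species indexed by $Z$}. Granted this, one has $\gamma^{y_i}=0$ for every $i$, so $\Psi(\gamma)=0$ identically, and then the right-hand side $\Psi(\gamma)\cdot A_\kappa \cdot Y$ of (\ref{CRN}) automatically vanishes at $\gamma$ regardless of $\kappa$ or $Y$. This reduces the lemma to a purely combinatorial statement about the directed graph $G$.

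To prove the claim I would introduce the set of complex-indices
\[
S \,:=\, \{\, j \in [n] \;:\; c_a \mid c^{y_j} \text{ for some } a \in Z\,\}.
\]
The siphon axiom translates precisely into: whenever $j\in S$ and $(i,j)$ is an edge of $G$, then $i\in S$; in other words, $S$ is closed under taking predecessors. The proof then hinges on two elementary observations. First, $S$ is non-empty: since $Z$ is non-empty and each species appears in at least one complex (the standard convention, dummy species being removable), any $z\in Z$ divides some $c^{y_j}$, so that $j\in S$. Second, because $G$ is strongly connected, any non-empty predecessor-closed subset of vertices must be all of $[n]$: for any $k\in [n]$, pick a directed path $k=v_0\to v_1\to \cdots\to v_m=j$ with $j\in S$, and apply predecessor-closure $m$ times along the path to conclude $k\in S$. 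Hence $S=[n]$, which is the claim.

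The main (and only non-routine) step is recognizing that the siphon condition is exactly predecessor-closure of $S$; once this dictionary is set up, the use of strong connectedness is a one-line path-chasing argument. Note that Proposition~\ref{AlgSiphon} is not needed—only the combinatorial definition of a siphon—and that the reverse direction genuinely fails without strong connectedness, as illustrated by the presence of complexes like $EP$ in Example~\ref{ex:3} whose vanishing is unrelated to any single minimal siphon.
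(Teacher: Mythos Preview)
Your proof is correct and follows essentially the same approach as the paper: cite Lemma~\ref{lem:bdySiphon} for the forward direction, and for the converse argue that every complex $c^{y_i}$ contains a species from $Z$, so that $\Psi(\gamma)=0$ and hence the whole right-hand side of~(\ref{CRN}) vanishes. The paper asserts this last step in one sentence (``Because $G$ is strongly connected, all complexes $c^{y_i}$ evaluated at $\gamma$ are zero''), whereas you spell out the predecessor-closure argument and explicitly flag the tacit assumption that every species occurs in some complex; both are welcome clarifications but do not change the underlying route.
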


\begin{proof}
The forward implication  is  Lemma~\ref{lem:bdySiphon}.  
Now let $\gamma$ be a boundary point whose zero-coordinate set $Z$ is a siphon. 
Because $G$ is strongly connected,
 all complexes $c^{y_{i}}$ evaluated at $\gamma$ are zero ($\gamma^{y_{i}}=0$), and hence, each monomial that appears on the right-hand side of (\ref{CRN}) vanishes
 at $c=\gamma$.  
\end{proof}

From a polyhedral geometry point of view, Lemma~\ref{bdy=siphonStrConn} states the following: {\em For strongly connected reaction networks $G$, any face of an invariant polyhedron $\mathcal{P}_{c^{(0)}}$ either has no steady states in its interior or the entire face consists of steady states.}  
We shall see now that a similar result holds for toric dynamical systems.  
Recall (from~\cite{TDS})
 that (\ref{CRN}) is a {\em toric dynamical system} if the parameters $\kappa_{ij}$
are such that $\Psi(c) \cdot A_\kappa = 0$ has a positive solution $c \in \mathbb{R}^{s}_{>0}\,$
(which is called a {\em complex-balancing steady state}).
The following result concerns the faces of invariant polyhedra of toric dynamical systems.

\begin{lem}
  \label{lem:classification} 
  Let $c^{(0)} \in \R^{s}_{>0}$ be a positive initial condition of a toric dynamical system.  
  Then a face $F_{Z}$ of the
  invariant polyhedron $\mathcal{P}_{c^{(0)}}$ contains a steady state in its interior if and only if
  $Z$ is a siphon.  \end{lem}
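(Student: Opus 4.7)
The forward implication is immediate. A steady state $\gamma$ in the relative interior of $F_Z$ has zero-coordinate set exactly $Z$, so by Lemma~\ref{lem:bdySiphon} the set $Z$ is a siphon.

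For the reverse implication, suppose $Z$ is a siphon and $F_Z \neq \emptyset$. My plan is to reduce to a Birch-type theorem for toric dynamical systems on a smaller species set. By Proposition~\ref{AlgSiphon}, $V_{\geq 0}(\idealP_Z)$ is forward-invariant under (\ref{CRN}), and therefore so is $F_Z$. Analyzing the restriction of (\ref{CRN}) to $V_{\geq 0}(\idealP_Z)$: whenever $c|_Z = 0$, only complexes $c^{y_i}$ with $y_{ia}=0$ for every $a\in Z$ survive in $\Psi(c)$, and the contrapositive of the siphon condition guarantees that every reaction $c^{y_i}\to c^{y_j}$ of $G$ emanating from such a complex lands in another such complex. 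So the dynamics on the face is governed by a well-defined subnetwork $G_Z$ on the species $[s]\setminus Z$ with rate constants inherited from $G$, and $F_Z$ is an invariant polyhedron of this reduced mass-action system.

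I would then invoke Birch's theorem for toric dynamical systems \cite{TDS}: such a system admits exactly one positive steady state in the relative interior of each of its invariant polyhedra. Applied to $G_Z$ and $F_Z$, this would yield a steady state positive on $[s]\setminus Z$, and lifting by zero on the $Z$-coordinates gives the desired steady state in the relative interior of $F_Z$. The main obstacle is establishing that $G_Z$ itself is a toric dynamical system, i.e.\ admits a positive complex-balancing steady state. The positive complex-balancing steady state $c^*$ of $G$ does not restrict directly, because at a complex $c^{y_j}\in G_Z$ there may be reactions in $G$ from complexes outside $G_Z$ contributing strictly positive incoming flux at $c^*$. I would circumvent this by working intrinsically with the non-negative toric variety $\overline{X_\kappa}\subset \R^s_{\geq 0}$ of all complex-balanced steady states of $G$: its stratification by torus orbits is indexed by coordinate subspaces $V_{\geq 0}(\idealP_Z)$, and verifying that a stratum is nonempty precisely when $Z$ is a siphon, combined with a face-respecting extension of the Birch homeomorphism $\overline{X_\kappa}\cap \mathcal{P}_{c^{(0)}} \to \mathcal{P}_{c^{(0)}}$, pinpoints a unique complex-balanced steady state of $G$ in the relative interior of $F_Z$.
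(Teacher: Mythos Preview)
The paper does not give a self-contained proof of this lemma; it simply cites \cite[Lemma~2.8]{Anderson08}. So there is no ``paper's approach'' to compare against beyond that reference.

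Your forward direction is fine and is exactly the application of Lemma~\ref{lem:bdySiphon} one expects.

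For the reverse direction, your overall strategy (restrict to the face, identify a reduced mass-action network $G_Z$, then apply a Birch-type existence theorem) is the natural one, and you correctly isolate the real difficulty: there is no reason the restriction of the positive complex-balancing steady state $c^*$ to the non-$Z$ coordinates should complex-balance $G_Z$, because at a surviving complex $c^{y_j}$ the balance equation in $G$ may include strictly positive inflow from complexes killed by $Z$. That is a genuine obstacle, and you are right to flag it.

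However, your proposed workaround does not close the gap. The two claims you invoke---that the boundary stratum of $\overline{X_\kappa}$ inside the relative interior of $V_{\geq 0}(\idealP_Z)$ is nonempty precisely when $Z$ is a siphon, and that the Birch map extends to a face-respecting homeomorphism $\overline{X_\kappa}\cap\mathcal{P}_{c^{(0)}}\to\mathcal{P}_{c^{(0)}}$---are each at least as hard as the lemma itself. In particular, ``stratum nonempty $\Leftrightarrow$ $Z$ is a siphon'' has one trivial direction (nonempty implies siphon, by Lemma~\ref{lem:bdySiphon}) and one direction that \emph{is} essentially the reverse implication you are trying to prove. Asserting it without argument makes the proposal circular at the crucial step. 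Likewise, the boundary extension of Birch's theorem to a homeomorphism onto the closed polyhedron is a nontrivial statement about degenerations in the toric variety and cannot simply be quoted from \cite{TDS}, which treats the interior.

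What actually needs to be supplied is a proof that the reduced network $G_Z$ admits its own positive complex-balancing steady state (this is the content of the Anderson reference). One route is to argue via the structure of weakly reversible networks: the complexes surviving on the face form a union of terminal strong components of $G$ when restricted appropriately, and the kernel of the reduced Laplacian has the required positive vector by the matrix-tree description of $\ker A_\kappa$. Once $G_Z$ is known to be toric, your Birch argument on $F_Z$ goes through. As written, though, the proposal stops exactly where the work begins.
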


\begin{proof}
This is derived in \cite[Lemma 2.8]{Anderson08}, albeit in different language.
\end{proof}

\section{Binomial ideals and monomial ideals}
\label{sec:Main}

In what follows we characterize the minimal siphons
of a chemical reaction network $G$ in the language of
combinatorial commutative algebra \cite{MS}. It will
be shown that they arise
as components in primary decompositions.  For any 
initial conditions $c^{(0)}$, we characterize those siphons that define
non-empty faces of the invariant polyhedron $\mathcal{P}_{c^{(0)}}$.
In the next section
we shall see that these results translate into a practical new method for enumerating siphons.

Throughout this section we fix the ring 
$\,R = \mathbb{Q}[c_{1}, \dots, c_{s}]/\langle c_1 c_2 \ldots c_s \rangle$.
This is the ring of polynomial functions with
$\Q$-coefficients on the union  of the
coordinate hyperplanes in $\R^s$.
All our ideals will live in this ring.   

With a given network $G$ we associate the following three ideals in $R$:
\begin{eqnarray*}
\idealI_G &  = & 
\bigl\langle \, c^{y_{i}} \cdot (c^{y_{j}} - c^{y_{i}})  ~:~ c^{y_{i}} \rightarrow c^{y_{j}} \text{ is a reaction of } 
G \,\bigr\rangle, \\
\idealJ_G &  = & 
\bigl\langle \, \,\, c^{y_{j}} - c^{y_{i}}  \, ~:~ \,\, c^{y_{i}} \rightarrow c^{y_{j}} \text{ is a reaction of } 
G \,\bigr\rangle, \\
\idealM_G & = & \bigl\langle \Psi(c) \bigr\rangle \,\,\, = \,\,\,
\bigl\langle \,c^{y_1} , c^{y_2}, \ldots, c^{y_n} \bigr\rangle .
\end{eqnarray*}
Thus $\idealI_G$ encodes the directed edges, and
$\idealJ_G$ encodes the underlying undirected graph.
These are pure difference binomial ideals \cite{DMM, ES},
while $\idealM_G$ is the monomial ideal of the complexes.
The following  is our main result.

\begin{thm}\label{thm:Main}
The minimal siphons of a chemical reaction network $G$ are the inclusion-minimal sets
$\{i \in [s]: c_i \in \idealP \}$ where $\idealP$ runs over the minimal primes of $\idealI_G$. 
If each connected component of $G$ is strongly connected then
$\idealI_G$ can be replaced in this formula by the ideal $\idealJ_G$.
Moreover, if $G$ is strongly connected then $\idealI_G$ can be replaced by
the monomial ideal $\idealM_G$. 
\end{thm}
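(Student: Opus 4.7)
The plan is to prove the biconditional: a non-empty $Z \subseteq [s]$ is a siphon if and only if $Z$ equals the support $\{a \in [s] : c_a \in \idealP\}$ of some prime ideal $\idealP \supseteq \idealI_G$ in $R$. Once this is established, the theorem follows because every prime containing $\idealI_G$ sits above a minimal prime of $\idealI_G$, so taking inclusion-minimal supports among all such primes or just the minimal ones yields the same collection, which then coincides with the minimal siphons.

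For the forward direction, I would fix a prime $\idealP \supseteq \idealI_G$ and let $Z$ be its support. Non-emptiness of $Z$ holds because $\idealP$ contains $c_1 c_2 \cdots c_s = 0$ in $R$ and is prime. To check the siphon axiom, take a reaction $c^{y_i} \to c^{y_j}$ and $z \in Z$ with $c_z \mid c^{y_j}$. Then $c^{y_j} \in \idealP$, and primality applied to the generator $c^{y_i}(c^{y_j} - c^{y_i}) \in \idealP$ gives either $c^{y_i} \in \idealP$ outright, or $c^{y_j} - c^{y_i} \in \idealP$, which combined with $c^{y_j} \in \idealP$ again yields $c^{y_i} \in \idealP$. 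A second use of primality extracts some variable dividing $c^{y_i}$ that lies in $\idealP$, providing the index $a \in Z$ required by the siphon axiom.

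For the converse, given a siphon $Z$, I would exhibit a prime by evaluating at the indicator point $\gamma_Z \in \mathbb{R}^s$ with $(\gamma_Z)_a = 0$ for $a \in Z$ and $(\gamma_Z)_a = 1$ otherwise; its maximal ideal is $\mathfrak{m}_Z := \idealP_Z + \langle c_a - 1 : a \notin Z\rangle$, whose support is exactly $Z$. To see $\idealI_G \subseteq \mathfrak{m}_Z$, evaluate each generator $c^{y_i}(c^{y_j} - c^{y_i})$ at $\gamma_Z$: if $c^{y_i}$ contains a variable from $Z$, the first factor vanishes, and otherwise the contrapositive of the siphon axiom forces $c^{y_j}$ to have no variable from $Z$ either, so both monomials evaluate to $1$ and the second factor vanishes. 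Any minimal prime of $\idealI_G$ contained in $\mathfrak{m}_Z$ then has support inside $Z$, and this support is a siphon by the forward direction, so it equals $Z$ whenever $Z$ is minimal among siphons.

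For the two refinements, the containments $\idealI_G \subseteq \idealJ_G \subseteq \idealM_G$ transfer the forward direction verbatim. For the $\idealJ_G$ statement under the hypothesis that each connected component of $G$ is strongly connected, I would verify $\gamma_Z \in V(\idealJ_G)$: iterating the siphon axiom backwards around the directed cycles in each strongly connected component shows that either every complex in the component contains a variable from $Z$ or none does, so every binomial $c^{y_j} - c^{y_i}$ indexed by an edge vanishes at $\gamma_Z$. For the $\idealM_G$ statement under strong connectivity of all of $G$, the same backwards propagation, starting from any complex containing a fixed $z \in Z$ and using directed paths from every other complex to it, shows that every complex contains a variable from $Z$, so $\idealM_G \subseteq \idealP_Z \subseteq \mathfrak{m}_Z$. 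The main care needed is in the bookkeeping between $R = \mathbb{Q}[c_1,\ldots,c_s]/\langle c_1 c_2 \cdots c_s\rangle$ and its polynomial overring: it is precisely the quotient by $c_1 c_2 \cdots c_s$ that forces the support $Z$ to be non-empty and cleanly ties the support-of-prime notion to the siphon axiom.
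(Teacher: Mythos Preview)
Your proof is correct and follows essentially the same route as the paper's. Both arguments pivot on the $0$--$1$ indicator point $\gamma_Z$ (equivalently, your maximal ideal $\mathfrak{m}_Z$): the paper phrases this geometrically by reducing points of $V_{\C}(\idealI_G)$ to $0$--$1$ vectors, while you phrase it algebraically by working directly with primes and their supports, but the content is the same. One minor difference in the $\idealJ_G$ step: the paper argues that $\idealI_G$ and $\idealJ_G$ share a radical, whereas you verify $\gamma_Z\in V(\idealJ_G)$ directly via propagation around strongly connected components; your version is slightly more elementary and avoids the radical computation, but both reach the same conclusion.
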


\begin{proof}
The complex variety  $V_{\C}(\idealI_G)$ consists of all points $\gamma \in \C^s$
having at least one zero coordinate and satisfying
$\,\gamma^{y_{i}} \cdot (\gamma^{y_{j}} - \gamma^{y_{i}}) = 0$ for all reactions.
We first claim that 
our assertion  is equivalent to the statement that the minimal siphons are
the inclusion-minimal sets of the form $\{i \in [s]: \gamma_i = 0\}$
where $\gamma$ runs over $V_{\C}(\idealI_G)$.  
Indeed if $\idealP$ is a minimal associated prime 
of $\idealI_G$, let $\gamma \in \{0,1 \}^s$ be defined by $\gamma_i=1$ if and only if $c_i \notin \idealP$.   
It follows that $\{i \in [s]: c_i \in \idealP \} = \{i \in [s]: \gamma_i = 0\}$ and $\gamma \in V_{\C}(\idealP) \subset V_{\C}(\idealI_G)$.  Conversely, if $\gamma \in V_{\C}(\idealI_G)$, then $\gamma \in V_{\C}(\idealP)$ for some minimal associated prime $\idealP$, and so we have the containment $\{i \in [s]: c_i \in \idealP \} \subset \{i \in [s]: \gamma_i = 0\}$.  If, furthermore, the set $\{i \in [s]: \gamma_i = 0\}$ is minimal among those defined by $\gamma' \in V_{\C}(\idealI_G)$, then by above it must follow that the containment is in fact equality.

Next, if $\gamma$ is in $V_{\C}(\idealI_G)$
 then we can replace $\gamma$ by the $0$-$1$ vector $\delta$ defined by
$\delta_i = 0$ if $\gamma_i = 0$ and $\delta_i= 1$ if $\gamma_i \not= 0$.
This non-negative real vector has the same support as $\gamma$
and lies in the variety of $\idealI_G$. Hence our claim is that
 the minimal siphons are
the inclusion-minimal sets of the form $\{i \in [s]: \delta_i = 0\}$
where $\delta$ runs over  $V_{\{0,1\}}(\idealI_G)$. But this is obvious because
$\,\delta^{y_{i}} \cdot (\delta^{y_{j}} - \delta^{y_{i}}) = 0$
if and only if $\delta^{y_{j}} = 0$ implies $\delta^{y_{i}} = 0$.

Now, the minimal associated primes of $\idealI_G$ depend only on the radical of $\idealI_G$,
so we can replace $\idealI_G$ by any other ideal that has the same radical. If the 
components of 
 $G$ are strongly connected then the complex  $\,c^{y_{i}}\,$ can produce $ \,c^{y_{j}}\,$
 if and only if $\,c^{y_j}\,$ can produce $\,c^{y_i}$, and in this case
 both $\, c^{y_{i}} \cdot (c^{y_{j}} - c^{y_{i}})  \,$ and
 $\, c^{y_{j}} \cdot (c^{y_{i}} - c^{y_{j}})  \,$ are in $\idealI_G$. 
 Hence the radical
 of $\idealI_G$ contains the binomial  $\, c^{y_{i}} - c^{y_{j}}$, and we conclude that
 $\idealI_G$ and $\idealJ_G$ have the same radical.
 
 Finally, $\idealM_{G}$ is a monomial ideal, and associated primes of a monomial ideal are of the form $\idealP_{Z}$ for some $Z \subset [s]$.  It is straightforward to see that if $G$ is strongly connected, $\idealP_{Z}$ contains $\idealM_{G}$ if and only if $Z$ is a siphon.  
 \end{proof}

When analyzing a concrete chemical reaction network $G$, one often is given
an initial vector $c^{(0)} \in \R^s_{> 0}$ for the dynamical system (\ref{CRN}),
or at least a subset $\Omega$ of $\R^s_{>0}$ that contains $c^{(0)}$. A siphon $Z \subset [s]$ of $G$ is called {\em $c^{(0)}$-relevant}
if the face $F_Z$ of the invariant polyhedron $\mathcal{P}_{c^{(0)}}$ is non-empty.  In other words, if $Z$ is $c^{(0)}$-relevant, then there exists a boundary point that is stoichiometrically compatible with $c^{(0)}$ and has zero-coordinate set containing $Z$.  
For any subset $\Omega$ of $\R^s_{>0}$, we say that $Z$ is {\em $\Omega$-relevant}
if it is $c^{(0)}$-relevant for at least one point $c^{(0)}$ in $\Omega$. Finally we call 
a siphon {\em relevant} if it is $\R^s_{>0}$-relevant. 
Relevant siphons are also called ``critical'' siphons \cite{PetriNetExtended},  and non-relevant siphons are also called ``stoichiometrically infeasible'' siphons \cite{AndShiu09} and ``structurally non-emptiable'' siphons \cite{PetriNetExtended}.   
We next explain how to enlarge the ideals $\idealI_G$, $\idealJ_G$, and $\idealM_G$ so that
their minimal primes encode only the siphons that are relevant.

We recall that the {\em stoichiometric subspace} $L_{\rm stoi}$ of $\R^s$ is spanned by
all vectors $y_{j}- y_{i}$ where $c^{y_{i}} \rightarrow c^{y_{j}}$ is a reaction in $G$.
Its orthogonal complement $\, L_{\rm cons} := (L_{\rm stoi})^\perp \,$  is the space of {\em conservation relations}.  Let $\mathcal{Q}$ denote
the image of the non-negative orthant $\R^s_{\geq 0}$ in the quotient space
$\,\R^s/ L_{\rm stoi} \simeq L_{\rm cons}$. Thus $\mathcal{Q}$ is a convex polyhedral
cone and its interior points are in bijection with the invariant polyhedra $\mathcal{P}_{c^{(0)}}$. Further, $\mathcal{Q}$ is isomorphic to the cone spanned by the columns of any 
matrix $\A$ whose rows form a basis for $L_{\rm cons}$.  This isomorphism is given by the map
$$
\phi_{\A} ~  : ~ \mathcal{Q} ~ \rightarrow~ \biggl\{ \sum\limits_{i=1}^{s} \alpha_{i} a_{i} ~:~ \alpha_{1}, 
\alpha_{2}, \dots, \alpha_{s} \geq 0   \biggr\} \quad 
,
\quad \bar{q} ~\mapsto ~ \sum\limits_{i=1}^{s} q_{i} a_{i}   ~,
$$
where $q = (q_{1}, q_{2}, \dots, q_{s}) \in \mathbb{R}^{s}_{\geq 0}$ and $a_{1}, a_{2}, \dots, a_{s}$ 
are the columns of the matrix $\A$.  For simplicity, we identify the cone $\mathcal{Q}$ with the image of $\phi_{\A}$.
A subset $F$ of $[s] = \{1,2,\ldots,s\}$ is called a {\em facet} of $\mathcal{Q}$
if the corresponding columns of $\A$ are precisely the rays lying
on a maximal proper face of $\mathcal{Q}$.  Any maximal proper face of $\mathcal{Q}$ also is called a facet.  
The list of all facets of $\mathcal{Q}$ can be computed using
polyhedral software such as {\tt polymake} \cite{polymake}.

We represent the facets of $\mathcal{Q}$ by the following
squarefree monomial ideal:
$$ \idealB \quad = \quad \! \bigcap_{F \,{\rm facet} \,{\rm of}\,\mathcal{Q}} \!
\! \bigl\langle \,c_i \,: \, i \not\in F \, \bigr\rangle \quad = \quad \bigcap_{F \,{\rm facet} \,{\rm of}\,\mathcal{Q}} \!
\! \idealP_{F^{\rm c}} . $$
Each vertex of an invariant polyhedron $\mathcal{P}_{c^{(0)}}$
is encoded uniquely by its support $V $, which is a subset of $[s]$.
Consider the squarefree monomial ideal
$$ \idealB_{c^{(0)}} \quad = \quad \bigl\langle \,
\prod_{i \in V} c_i \,:\, V \,\hbox{encodes a vertex of} \, \,\mathcal{P}_{c^{(0)}} \,\bigr\rangle~.$$
The distinct combinatorial types of the polyhedra $\, \mathcal{P}_{c^{(0)}} \,$
determine a  natural {\em chamber decomposition} of
the cone $\mathcal{Q}$ into finitely
many smaller cones: if two polyhedra $\mathcal{P}_{c^{(0)}}$ and $\mathcal{P}_{d^{(0)}}$ correspond to points in such a chamber of the decomposition, then the polyhedra have the same set of supports $V$ of their vertices.  For an example see Figure~\ref{fig:Q}.
In the context of chemical reaction networks, the same chamber decomposition
appeared in  recent work  of Craciun {\em et al.} \cite{CPR}.
Specifically, its chambers were denoted $S_i$ in \cite[\S 2.1]{CPR}.

The ideal $\idealB_{c^{(0)}}$ depends only
on the chamber that contains the image of $c^{(0)}$.
For any subset $\Omega \subset \R^s_{> 0}$, we take
the sum of the ideals corresponding to all chambers that
intersect the image of $\Omega$ in $\mathcal{Q}$. That sum is the ideal
$$ \idealB_{\Omega}  \quad = \quad \bigl\langle \,
\prod_{i \in V} c_i \,:\, V \,\hbox{encodes a vertex of} \,\, \mathcal{P}_{c^{(0)}} \,\,
\hbox{for some} \,\, c^{(0)} \in \Omega\, \bigr\rangle~. $$
The above ideals are considered either in the polynomial
ring $  \mathbb{Q}[c_1,\ldots,c_s]$ or in its quotient
$R = \mathbb{Q}[c_1,\ldots,c_s]/\langle c_1 c_2 \cdots c_s \rangle$,
depending on the context.

Let $\idealI_1$ and $\idealI_2$ be two arbitrary ideals in $R$. Recall (e.g.~from \cite{CLO}) that
the {\em saturation} of $\idealI_1$ with respect to $\idealI_2$ is a new ideal that contains $\idealI_1$, namely,
$$ {\rm Sat}(\idealI_1,\idealI_2) \,\, = \,\,
(\idealI_1 : \idealI_2^\infty) \,\, = \,\, \bigl\{ f \in R \,:\,
f \cdot (\idealI_2)^m \subseteq \idealI_1 \,\hbox{for some} \,m \in \mathbb{Z}_{>0} \bigr\}. $$
Here, we shall be interested in the following nine saturation ideals:
\begin{equation}
\label{nineideals} \begin{matrix}
{\rm Sat}(\idealI_G, \idealB) , & \,\, {\rm Sat}(\idealI_G, \idealB_{c^{(0)}}), & \,\, {\rm Sat}(\idealI_G, \idealB_\Omega), \\\
{\rm Sat}(\idealJ_G, \idealB) , & \,\, {\rm Sat}(\idealJ_G, \idealB_{c^{(0)}}), & \,\, {\rm Sat}(\idealJ_G, \idealB_\Omega), \\\
{\rm Sat}(\idealM_G, \idealB) , & \,\, {\rm Sat}(\idealM_G, \idealB_{c^{(0)}}), & \,\, {\rm Sat}(\idealM_G, \idealB_\Omega).
\end{matrix}
\end{equation}
The following theorem is a refinement of our result in Theorem \ref{thm:Main}.

\begin{thm} \label{thm:SecondMain}
The relevant minimal siphons of $G$ are the inclusion-minimal sets 
$\{i \in [s]: c_i \in \idealP \}$ where $\idealP$ runs over minimal primes from (\ref{nineideals}).
The ideals in the first, second, and third columns yield relevant siphons,
$c^{(0)}$-relevant siphons, and  $\Omega$-relevant siphons, respectively.
The ideals in the first row are for all networks $G$, those in the third row for strongly
connected $G$, and those in the middle  row for $G$ with
 strongly connected components.
\end{thm}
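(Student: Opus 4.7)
The plan is to deduce the theorem from Theorem~\ref{thm:Main} via two main ingredients: a standard algebraic fact about saturation, and a geometric dictionary translating the non-containment $\idealB_\star \not\subseteq \idealP$ into the corresponding relevance condition on $Z = \{i : c_i \in \idealP\}$. A third, minor ingredient handles the bookkeeping between ``minimal siphon that is relevant'' and ``inclusion-minimal among the relevant primes.''

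The algebraic fact is that for ideals $\idealI, \idealJ$ in a Noetherian ring, the minimal primes of $(\idealI : \idealJ^\infty)$ are exactly those minimal primes of $\idealI$ that do not contain $\idealJ$. Applied to the nine ideals in (\ref{nineideals}), the minimal primes of ${\rm Sat}(\idealI_G, \idealB_\star)$ are precisely the minimal primes $\idealP$ of $\idealI_G$ with $\idealB_\star \not\subseteq \idealP$, and similarly for $\idealJ_G$ and $\idealM_G$. Because each $\idealB_\star$ is a monomial ideal and $\idealP$ is prime, containment $\idealB_\star \subseteq \idealP$ depends only on which variables lie in $\idealP$, so $\idealB_\star \subseteq \idealP$ iff $\idealB_\star \subseteq \idealP_Z$ where $Z = \{i : c_i \in \idealP\}$. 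The reduction between the three rows is inherited directly from the proof of Theorem~\ref{thm:Main}: that proof relates $\idealI_G$, $\idealJ_G$, $\idealM_G$ through radical identities under the strongly-connected-components and strong-connectedness hypotheses, and saturation depends only on the radical at the level of minimal primes.

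For the dictionary, write $\idealB = \bigcap_F \idealP_{F^c}$ over the facets $F$ of $\mathcal{Q}$; primality of $\idealP_Z$ gives $\idealB \subseteq \idealP_Z$ iff $F^c \subseteq Z$ for some facet $F$. The image of $V_{\geq 0}(\idealP_Z)$ under $\phi_\A$ is the subcone of $\mathcal{Q}$ generated by $\{a_i : i \notin Z\}$, and $Z$ is relevant precisely when this subcone meets the interior of $\mathcal{Q}$, which holds iff $\{a_i : i \notin Z\}$ is not contained in any facet, iff $F^c \not\subseteq Z$ for every $F$. In the second column, $\idealB_{c^{(0)}} \subseteq \idealP_Z$ iff every vertex-support $V$ of $\mathcal{P}_{c^{(0)}}$ meets $Z$, iff no vertex of $\mathcal{P}_{c^{(0)}}$ lies in the face $F_Z$, iff $F_Z$ is empty (since a face of a polyhedron is non-empty iff it contains a vertex), iff $Z$ is not $c^{(0)}$-relevant. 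The third column is the $\Omega$-union of the second, handled identically. A final monotonicity observation closes the argument: if $Z' \subseteq Z$ are both siphons then $F_{Z'} \supseteq F_Z$, so relevance is inherited downward along siphon containment; hence a minimal relevant siphon is also a minimal siphon, and ``inclusion-minimal among the $Z_\idealP$'s from (\ref{nineideals})'' coincides with ``minimal siphons that are relevant.''

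The main obstacle is the polyhedral lemma used in the first column: for a polyhedral cone $\mathcal{Q}$, a subcone generated by some of its extreme rays meets the interior of $\mathcal{Q}$ iff those rays are not all contained in a common facet. The forward direction is immediate. For the converse, suppose the subcone lies in $\partial\mathcal{Q}$, pick a relative-interior point $p$ of the subcone, and choose a facet $F$ of $\mathcal{Q}$ containing $p$. The linear functional defining $F$ is nonnegative on $\mathcal{Q}$ and vanishes at $p$; since $p$ is relative-interior to the subcone and the functional is linear, it must vanish on the entire linear span of the subcone, so the subcone lies inside $F$. With this lemma and the dictionary established, assembling the nine cases is routine.
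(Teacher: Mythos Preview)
Your argument is correct and follows essentially the same route as the paper's own proof: both reduce to Theorem~\ref{thm:Main} via the standard fact that the minimal primes of ${\rm Sat}(\idealI,\idealJ)$ are those minimal primes of $\idealI$ not containing $\idealJ$ (the paper phrases this geometrically as ``components of $V(\idealI)$ not lying in $V(\idealJ)$''), and both translate the non-containment condition into the appropriate relevance condition using the same polyhedral facts about $\mathcal{Q}$ and the vertex structure of $\mathcal{P}_{c^{(0)}}$. Your write-up is a bit more explicit than the paper's on two points it leaves implicit---the polyhedral lemma that a subcone misses ${\rm int}(\mathcal{Q})$ iff it lies in a facet, and the monotonicity bookkeeping reconciling ``inclusion-minimal among the $Z_\idealP$'' with ``minimal siphons that are relevant''---but this is added care, not a different strategy.
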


\begin{proof}
The variety of the ideal ${\rm Sat}(\idealI_1,\idealI_2)$ is the union of
all irreducible components of the variety $V(\idealI_1)$ that do not lie in $V(\idealI_2)$.
The result now follows from Theorem \ref{thm:Main} and the following
observations. The non-negative variety $V_{\geq 0}(\idealB)$ consists of all points in $\R^s_{\geq 0}$
whose image modulo $L_{\rm stoi}$ lies in the boundary of the cone $\mathcal{Q}$.
Thus, for a minimal siphon $Z$, the image of the variety $V_{\geq 0}(\idealP_Z)$ is in the boundary of 
$\mathcal{Q}$ if and only if $Z$ is not relevant.  More precisely, the image of $V_{\geq 0}(\idealP_Z)$ is in the interior of the subcone spanned by $\{ a_i ~:~ i \notin Z\}$, so there exists a facet of $\mathcal{Q}$ that contains the subcone if and only if $Z$ is not relevant.  Therefore, any
irreducible component of $V(\idealJ_G)$ (or $V(\idealI_G)$ or $V(\idealM_G)$)  defines a non-relevant siphon $Z$ if and only if it lies in $V (\idealP_{F^{ c}})$ for some facet $F$ of $\mathcal{Q}$, which is equivalent to lying in $V(\idealB)$.
 
Next, the variety $V_{\geq 0}(\idealB_{c^{(0)}})$ is the union of all faces of 
the orthant $\R^s_{\geq 0}$ that are disjoint from the invariant polyhedron
$\mathcal{P}_{c^{(0)}}$.  So, for a minimal siphon $Z$, the ideal $\idealP_Z$ does not contain $\idealB_{c^{(0)}}$ if and only if there exists a vertex of $\mathcal{P}_{c^{(0)}}$ whose zero-coordinate set contains $Z$, which is equivalent to the condition that the face $F_Z$ of the polyhedron is non-empty.  Hence, any component of $V(\idealJ_G)$ (or $V(\idealI_G)$ or $V(\idealM_G)$) that defines a minimal siphon $Z$ lies in $V\left(\idealB_{c^{(0)}}\right)$ if and only if $Z$ is not relevant.  
Finally, the variety $V_{\geq 0}(\idealB_\Omega)$ is the
intersection of the varieties $V_{\geq 0}(\idealB_{c^{(0)}})$ as $c^{(0)}$ runs over $\Omega$.
\end{proof}

\begin{example}
In Example \ref{ex:2} and Example \ref{ex:3}, 
$\mathcal{Q}$ is the cone over a triangle, and the three minimal 
siphons are precisely the facets of that triangular cone. Thus, there are no
relevant siphons at all. This is seen algebraically by
verifying the identities  $\,{\rm Sat}(\idealJ_G,\idealB) = \langle 1 \rangle $  and
${\rm Sat}(\idealI_G,\idealB) = \langle 1 \rangle$. \qed
\end{example}

We now discuss the case when a network has no relevant siphons, by making
the connection to  work of Angeli {\em et al.} \cite{PetriNetExtended}, which focuses on chemical reaction networks whose siphons $Z$ all satisfy the following condition: 
\begin{align*} (\bigstar) & \text{ there exists a non-negative conservation relation $l \in L_{\rm cons} \cap \R^s_{\geq 0}$}  \\ & \text{ whose support ${\rm supp}(l) = \{ i \in [s]: l_i > 0 \}$ is a subset of $Z$.} 
\end{align*}

Recall that Angeli~{\em et al.}\ call siphons satisfying this property ``structurally non-emptiable''  \cite[\S 8]{PetriNetExtended}.  Note that the property ($\bigstar$) needs only to be checked for minimal siphons in order for all siphons to satisfy the property.  For some chemical reaction systems, such as toric dynamical systems (including Examples~\ref{ex:1} and~\ref{ex:2}), this property is sufficient for proving persistence \cite{Anderson08,AndShiu09,PetriNetExtended,SiegelMacLean},
and what was offered in this section are elegant and efficient algebraic tools for
deriving such proofs. 

\begin{lem}\label{prop:non-relevant=star}
For a chemical reaction network $G$, a siphon $Z$
satisfies property ($\bigstar$) if and only if $Z$
 is not relevant (which is equivalent to~$\idealB \subseteq \idealP_Z$).
 
\end{lem}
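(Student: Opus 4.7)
The plan is to prove the three-way equivalence by closing a cycle: $(\bigstar) \Rightarrow Z$ is not relevant $\Rightarrow \idealB \subseteq \idealP_Z \Rightarrow (\bigstar)$. The easy implication is $(\bigstar) \Rightarrow$ non-relevance: given a witness $l \in L_{\rm cons} \cap \R^s_{\geq 0}$ with ${\rm supp}(l) \subseteq Z$ and any $c^{(0)} \in \R^s_{>0}$, the linear functional $\langle l, \cdot \rangle$ is constant on the invariant polyhedron $\mathcal{P}_{c^{(0)}}$ because $l \in L_{\rm stoi}^\perp$. It takes value $\langle l, c^{(0)} \rangle > 0$ there, since $l \neq 0$ has non-negative entries and $c^{(0)}$ is strictly positive. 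Yet on any $\gamma \in F_Z$ the coordinates $\gamma_i$ vanish for $i \in Z$, while $l_i = 0$ for $i \notin Z$, so $\langle l, \gamma \rangle = 0$. This forces $F_Z = \emptyset$ for every $c^{(0)}$, so $Z$ is not relevant.

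For the intermediate equivalence, I would translate everything to the cone $\mathcal{Q} = \phi_{\A}(\R^s_{\geq 0})$. The face $F_Z$ is non-empty iff $\phi_\A(c^{(0)})$ lies in the subcone $\mathcal{Q}_{Z^c} := \mathrm{cone}\{a_i : i \notin Z\}$, because a candidate point $\gamma \in F_Z$ is precisely a non-negative pre-image of $\phi_{\A}(c^{(0)})$ supported in $Z^c$. Since $\phi_\A$ sends $\R^s_{>0}$ onto the relative interior of $\mathcal{Q}$, non-relevance of $Z$ is equivalent to $\mathcal{Q}_{Z^c}$ being disjoint from ${\rm relint}(\mathcal{Q})$, and hence, as a subcone of $\mathcal{Q}$, contained in some facet $F$ of $\mathcal{Q}$. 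This containment reads combinatorially as $Z^c \subseteq F$, i.e.\ $F^c \subseteq Z$, i.e.\ $\idealP_{F^c} \subseteq \idealP_Z$. Using the decomposition $\idealB = \bigcap_F \idealP_{F^c}$ and the fact that an intersection of (monomial) primes is contained in a prime iff one of the factors is, this is in turn equivalent to $\idealB \subseteq \idealP_Z$.

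The substantive direction is non-relevance $\Rightarrow (\bigstar)$, where I have to produce a conservation law from polyhedral data. By the previous paragraph, $\mathcal{Q}_{Z^c}$ lies in some facet $F$ of $\mathcal{Q}$. The facet $F$ is cut out in $\R^s / L_{\rm stoi}$ by a non-zero supporting hyperplane, which lifts to a non-zero linear functional on $\R^s$ vanishing on $L_{\rm stoi}$; that is, an element $l \in L_{\rm stoi}^\perp = L_{\rm cons}$. The supporting hyperplane is non-negative on $\mathcal{Q}$, which unpacked means $l_i = \langle l, e_i \rangle \geq 0$ for each $i$, so $l \in L_{\rm cons} \cap \R^s_{\geq 0}$. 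Vanishing on $F \supseteq \mathcal{Q}_{Z^c}$ forces $l_i = 0$ for every $i \notin Z$, so ${\rm supp}(l) \subseteq Z$, verifying $(\bigstar)$.

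The main obstacle is the last step: one needs to justify that a facet of $\mathcal{Q}$ is truly cut out by a single non-negative element of $L_{\rm cons}$. I would handle this by the standard duality between a polyhedral cone and its dual, noting that the dual of $\mathcal{Q} = \phi_\A(\R^s_{\geq 0})$, realized inside $L_{\rm cons}$ via the inner product, is exactly $L_{\rm cons} \cap \R^s_{\geq 0}$, so that facet-defining functionals correspond bijectively (up to scaling) to extreme rays of this dual cone. Everything else is bookkeeping about monomial prime containments and the standard identification $\R^s/L_{\rm stoi} \cong L_{\rm cons}$.
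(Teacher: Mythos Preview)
Your proof is correct and follows essentially the same approach as the paper. The paper also dismisses $(\bigstar)\Rightarrow$ non-relevance as clear, and for the converse it likewise picks a facet $F$ of $\mathcal{Q}$ containing the image of $V_{\geq 0}(\idealP_Z)$ and builds $l$ from the facet normal (written there as $l=v\A$ for a defining functional $v$ of $F$); your version just phrases this via the identification $\R^s/L_{\rm stoi}\cong L_{\rm cons}$ and additionally spells out the parenthetical equivalence with $\idealB\subseteq\idealP_Z$, which the paper treats as established in the proof of Theorem~\ref{thm:SecondMain}.
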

\begin{proof}
The ``only if'' direction is clear.  For the ``if'' direction, let $Z$ be a non-relevant siphon.  As usual, for $\sigma:= \dim L_{\rm stoic}$, we fix a matrix $\A \in \R^{(s-\sigma) \times s}$ whose rows span $L_{\rm cons}$, and we identify $\mathcal{Q}$ with the cone spanned by the columns $a_{i}$ of $\A$.  Let $F$ be a facet of $\mathcal{Q}$ that contains the image of $V_{\geq 0}(\idealP_{Z})$, and let $v\in \R^{s-\sigma}$ be a vector such that the linear functional $\langle v, - \rangle$ is zero on $F$ and is positive on points of $\mathcal{Q}$ outside of $F$.  The vector $l:= v\A$ is in $ L_{\rm cons}$, and we claim that this is a non-negative vector as  in ($\bigstar$).  Indeed, $l_{i}= \langle v, a_{i} \rangle$ is zero if $i \in F$ and is positive if $i \notin F$, and thus, ${\rm supp}(l) =F^{c} \subseteq Z$.  
\end{proof}

The following result extends Theorem~2 in Angeli {\em et al.} \cite{PetriNetExtended}.

\begin{thm} \label{thm:all_not_relevant}
None of the siphons of the network $G$ are relevant
 if and only if
${\rm Sat}(\idealI_G,\idealB) = \langle 1 \rangle$
if and only if all siphons satisfy property ($\bigstar$).  In this case, none of the
  invariant polyhedra $\mathcal{P}_{c^{(0)}}$ has a boundary steady state.
 \end{thm}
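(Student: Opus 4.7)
The plan is to establish the three stated conditions in a triangle of equivalences and then to deduce the non-existence of boundary steady states as a corollary of Lemma~\ref{lem:bdySiphon}. Throughout, the workhorse is Lemma~\ref{prop:non-relevant=star}, which already identifies non-relevance of an individual siphon $Z$ with condition~($\bigstar$) and with the algebraic containment $\idealB\subseteq\idealP_Z$.

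The equivalence ``no siphon of $G$ is relevant'' $\Leftrightarrow$ ``all siphons satisfy~($\bigstar$)'' is essentially a relabeling: it is just Lemma~\ref{prop:non-relevant=star} universally quantified over the (finite) collection of siphons of $G$.

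For ``no siphon is relevant'' $\Leftrightarrow$ ``${\rm Sat}(\idealI_G,\idealB)=\langle 1\rangle$'', I would first record a \emph{monotonicity} observation: whenever $Z\subseteq Z'$ are siphons one has $\idealP_Z\subseteq\idealP_{Z'}$, whence $F_{Z'}\subseteq F_Z$ inside every invariant polyhedron; therefore relevance of $Z'$ forces relevance of $Z$. Since every siphon contains a minimal siphon, ``no siphon is relevant'' is equivalent to ``no \emph{minimal} siphon is relevant''. Theorem~\ref{thm:SecondMain} identifies the relevant minimal siphons of $G$ with the inclusion-minimal supports $\{i\in[s]:c_i\in\idealP\}$ as $\idealP$ ranges over the minimal primes of ${\rm Sat}(\idealI_G,\idealB)$. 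Hence the absence of relevant minimal siphons is exactly the condition that ${\rm Sat}(\idealI_G,\idealB)$ possesses no minimal prime in the Noetherian ring $R$, which is equivalent to it being the unit ideal.

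Finally, for the concluding claim, assume no siphon is relevant and fix any $c^{(0)}\in\R^s_{>0}$. If $\mathcal{P}_{c^{(0)}}$ contained a boundary steady state $\gamma$, then by Lemma~\ref{lem:bdySiphon} its zero-coordinate set $Z:=\{i\in[s]:\gamma_i=0\}$ would be a siphon, and $\gamma\in F_Z$ would force $F_Z\neq\emptyset$, making $Z$ a $c^{(0)}$-relevant, and \emph{a fortiori} relevant, siphon---a contradiction. I do not anticipate any genuinely hard step: everything either invokes Lemma~\ref{prop:non-relevant=star}, Lemma~\ref{lem:bdySiphon}, or Theorem~\ref{thm:SecondMain}, or is a formal Noetherian/monotonicity observation. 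The only point requiring a little care is the reduction from arbitrary siphons to minimal siphons, which is what allows the minimal-prime criterion of Theorem~\ref{thm:SecondMain} to settle the question for \emph{all} siphons at once.
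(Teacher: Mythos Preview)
Your proposal is correct and follows essentially the same approach as the paper: the equivalence with~($\bigstar$) and the boundary steady state claim are handled exactly as the paper does, via Lemma~\ref{prop:non-relevant=star} and Lemma~\ref{lem:bdySiphon}. The only minor difference is that for the saturation equivalence you route through Theorem~\ref{thm:SecondMain} (and a monotonicity reduction to minimal siphons), whereas the paper simply invokes Lemma~\ref{prop:non-relevant=star} directly---implicitly using that $\idealB$ is a monomial ideal, so $\idealB\subseteq\idealP$ for a minimal prime $\idealP$ of $\idealI_G$ is equivalent to $\idealB\subseteq\idealP_{Z}$ for $Z=\{i:c_i\in\idealP\}$; your version makes this step more explicit but is not a genuinely different argument.
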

\begin{proof}
The first claim follows from Lemma~\ref{prop:non-relevant=star} above.  
The second claim follows from the definition of relevant siphons and Lemma~\ref{lem:bdySiphon}.
\end{proof}

We next present a characterization of the ideals $\idealB$
and $\idealB_{c^{(0)}}$ in terms of combinatorial commutative algebra.
This allows us to compute these ideals entirely within 
a computer algebra system (such as {\tt Macaulay 2} \cite{M2}),
without having to make any calls to 
polyhedral software (such as {\tt polymake}). We assume a subroutine that computes
the largest monomial ideal contained in a given binomial ideal in the
 polynomial ring $\R[c_1,\ldots,c_s]$.
Let $\idealI_{\rm stoi} $ and $\idealI_{\rm cons}$ denote the lattice ideals
associated with the subspaces $L_{\rm stoi}$ and $L_{\rm cons}$.
These ideals are generated by the binomials
$c^{u_+} - c^{u_-}$ where $u = u_+  - u_-$ runs
over all vectors in $\Z^s$ that lie in the respective subspace.  
Here, $u_{+} \in \Z^s_{\geq 0}$ and $u_{-}\in \Z^s_{\geq 0}$ denote the positive and negative parts of a vector $u$ in $\Z^s$.

\begin{alg} \label{algorithm}
The ideals  $\idealB$ and $\idealB_{c^{(0)}}$ 
can be computed as follows:
\begin{enumerate}
\item The squarefree monomial ideal $\idealB$ is the radical of the largest monomial ideal
contained in  $\idealI_{\rm stoi} + \langle c_1 c_2 \cdots c_s \rangle$.
\item  The squarefree monomial ideal  $\idealB_{c^{(0)}}$ is  Alexander dual to the radical of the largest monomial ideal
contained in the initial ideal ${\rm in}_{c^{(0)}}(\idealI_{\rm cons})$.
\item If $c^{(0)}$ is generic (i.e.~the polyhedron $\mathcal{P}_{c^{(0)}}$ is simple)
then  the radical of ${\rm in}_{c^{(0)}}(\idealI_{\rm cons})$
is a monomial ideal, and its Alexander dual equals $\idealB_{c^{(0)}}$.
\end{enumerate}
\end{alg}

The correctness of part 1 rests on the fact that the zero set of
the lattice ideal $\idealI_{\rm stoi}$ is precisely the affine toric variety
associated with the cone $\mathcal{Q}$. Adding the principal
ideal $\langle c_1 c_2 \cdots c_s \rangle$ to $\idealI_{\rm stoi}$ is equivalent to
taking the image of $\idealI_{\rm stoi}$ in $R$. The nonnegative
variety of the resulting ideal is the union of all
faces of $\R^s_{\geq 0}$ whose image modulo 
$L_{\rm stoi}$ is in the boundary of~$\mathcal{Q}$.

For parts 2 and 3 we are using concepts and results from the
textbook \cite{MS}. The key idea is to use the
initial concentration vector $c^{(0)}$  as a partial term order.
 Initial ideals of lattice ideals are discussed in 
\cite[\S 7.4]{MS}. Alexander duality of squarefree monomial ideals
is introduced in \cite[\S 5.1]{MS}. The correctness of part 3 is an immediate corollary to
\cite[Theorem 7.33]{MS}, and part 2 is derived from part 3 by  a perturbation argument.  In the next section, we demonstrate how to compute all these ideals in {\tt Macaulay 2}.
\section{Computing siphons in practice} \label{sec:ComputingSiphons}

We start with a network that has both relevant and non-relevant siphons.
This example serves to illustrate the various results in the previous section.

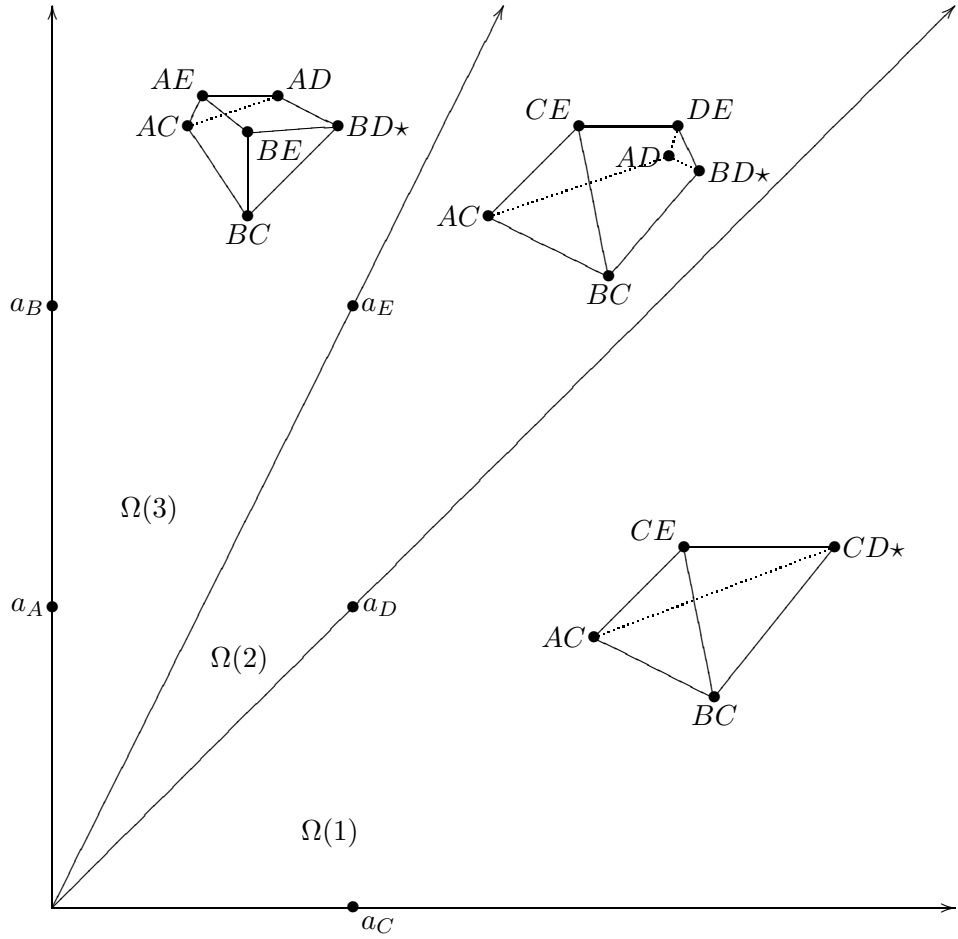
\begin{figure}[htbp]
\begin{center}
\[
 \begin{xy}<40mm,0cm>:
 (0,0) ="origin" *!UR{}*{}; 
 (1,0) ="C" *+!UL{a_{C}} *{\bullet}; 
 (3,0) ="3C" *!UR{}*{};
 (1,1) ="D" *!L{~a_{D}}*{\bullet};
 (3,3) ="moreD";
 (1.5,3)="moreE"; 
 (1,2) ="E" *+!L{a_{E}} *{\bullet};
 (0,1) ="A" *+!R{a_{A}} *{\bullet}; 
 (0,2) ="B" *+!R{a_{B}} *{\bullet}; 
 (0,3) ="3A" *+!R{ } *{}; 
 (.8,.25) ="region1" *+!L{\Omega(1)} *{}; 
 (.5,.75) ="region2" *+!LD{\Omega(2)} *{}; 
 (.2,1.25) ="region3" *+!LD{\Omega(3)} *{}; 
 (1.45,2.1) ="anchor2"; 
 "anchor2"+(0,.2) = "AC(2)" *+!R{AC} *{\bullet};
 "anchor2"+(.3,.5) = "CE(2)" *+!DR{CE} *{\bullet};
 "anchor2"+(.4,0) = "BC(2)" *+!U{BC} *{\bullet};
 "anchor2"+(.6,.4) = "AD(2)" *+!R{AD} *{\bullet};
 "anchor2"+(.63,.5) = "DE(2)" *+!DL{DE} *{\bullet};
 "anchor2"+(.7,.35) = "BD(2)" *+!L{BD \star} *{\bullet};
 "AC(2)";"CE(2)" **@{-};
 "BC(2)";"CE(2)" **@{-};
 "AC(2)";"BC(2)" **@{-};
 "DE(2)";"CE(2)" **@{-};
 "BC(2)";"BD(2)" **@{-};
 "AC(2)";"AD(2)" **@{.};
 "BD(2)";"AD(2)" **@{.}; 
 "AD(2)";"DE(2)" **@{.};
 "DE(2)";"BD(2)" **@{-};
 (0.25,2.3) ="anchor3"; 
 "anchor3"+(.2,.3) = "AC(3)" *+!R{AC} *{\bullet};
 "anchor3"+(.4,0) = "BC(3)" *+!U{BC} *{\bullet};
 "anchor3"+(.5,.4) = "AD(3)" *+!DL{AD} *{\bullet};
 "anchor3"+(.7,.3) = "BD(3)" *+!L{BD \star} *{\bullet};
 "anchor3"+(.25,.4) = "AE(3)" *+!DR{AE} *{\bullet};
 "anchor3"+(.4,.28) = "BE(3)" *+!UL{BE} *{\bullet};
 "AC(3)";"BC(3)" **@{-};
 "BC(3)";"BD(3)" **@{-};
 "AC(3)";"AD(3)" **@{.};
 "BD(3)";"AD(3)" **@{-}; 
 "AC(3)";"AE(3)" **@{-};
 "AE(3)";"AD(3)" **@{-};
 "AE(3)";"BE(3)" **@{-};
 "BE(3)";"BC(3)" **@{-};
 "BE(3)";"BD(3)" **@{-};
 (1.8,0.7) ="anchor1"; 
  "anchor1"+(0,.2) = "AC(1)" *+!R{AC} *{\bullet};
 "anchor1"+(.3,.5) = "CE(1)" *+!DR{CE} *{\bullet};
 "anchor1"+(.4,0) = "BC(1)" *+!U{BC} *{\bullet};
 "anchor1"+(.8,.5) = "CD(1)" *+!L{CD \star} *{\bullet};
 "AC(1)";"CE(1)" **@{-};
 "BC(1)";"CE(1)" **@{-};
 "AC(1)";"BC(1)" **@{-};
 "CD(1)";"CE(1)" **@{-};
 "BC(1)";"CD(1)" **@{-};
 "AC(1)";"CD(1)" **@{.};
   {\ar "origin"*{};"3C"+(0,0)*{}};
   {\ar "origin"*{};"3A"+(0,0)*{}};
   {\ar "origin"*{};"moreD"+(0,0)*{}};
   {\ar "origin"*{};"moreE"+(0,0)*{}};
    \end{xy}
 \]
 \caption{The chamber decomposition of the cone $\mathcal{Q}$ for the network
 in Example~\ref{ex:1}.  The cone is spanned by the columns of the matrix $\A$ in (\ref{matrixA}). 
 Each of the three maximal chambers $\Omega(1)$, $\Omega(2)$, and $\Omega(3)$
 contains a picture of the corresponding $3$-dimensional polyhedron $\mathcal{P}_{c^{(0)}}$. The vertices of each polyhedron are labeled by their supports. The star ``$\star$'' indicates the unique vertex steady state, which arises from the siphon $\{A,B,E\}$ or $\{A,C,E\}$.  \label{fig:Q} }
\end{center}
\end{figure}

\begin{example} \label{ex:Return}
We return to the chemical reaction network in Example~\ref{ex:1}.  The sums ${C}+{D}+ {E}$ 
 and ${A} + 2{B} + D + 2E $ are both constant along trajectories.  
Chemically, this says that both the total amount of free and bound forms of the ligand and the total amount of the free and bound forms of the receptor remain constant.  Thus, the matrix $\A$ can be taken to be
 \begin{align} \label{matrixA}
 \A \quad = \quad  \bigl(a_A,a_B,a_C,a_D,a_E\bigr) \quad  = \quad
 \left( 
 \begin{array}{ccccc} 
 0 & 0 & 1 & 1& 1 \\
 1 & 2 & 0 & 1 & 2 \\
 \end{array}
 \right) ~.
 \end{align}
 The two rows of $\A$ form a basis of $L_{\rm cons}$.
   The cone $\mathcal{Q}$ is spanned by the columns of $\A$.  The chamber decomposition of $\mathcal{Q}$ is depicted in Figure~\ref{fig:Q}.  We see that the two facets of $\mathcal{Q}$ define the following ideal of $\mathbb{Q}[A,B,C,D,E]$:
\[
\idealB \quad = \quad  \langle C,~D,~E \rangle \cap \langle A,~B,~D,~E \rangle~
\quad = \quad \langle \,AC, ~ BC,  ~D, ~E \,\rangle~.
\]
The relevant siphons are derived from
$\idealM_G = \langle A^2C, AD, E, BC \rangle$ as follows:
$$ {\rm Sat}(\idealM_G, \idealB) \,\,\,= \,\,\,\langle A, BC, E \rangle
\,\,= \,\, \langle A,B,E \rangle  \,\cap \,\langle A,C,E \rangle~. $$
Thus two of three minimal siphons in Example \ref{ex:1} are relevant.
The third siphon is not relevant as its ideal $\langle C, D,E \rangle$
contains $\idealB$. This corresponds to the fact, seen in Figure~\ref{fig:Q}, that the vectors
$a_A$ and $ a_B$ span a facet of $\mathcal{Q}$.

The chamber decomposition of $\mathcal{Q}$ consists of
 three open chambers $\Omega (1)$, $\Omega (2)$, and $\Omega (3)$, and two rays $\Omega (12)$ and $\Omega (23)$ between the three chambers.  These five chambers are encoded in the following ideals, whose generators can be read off from the vertex labels of the polyhedra 
 $\mathcal{P}_\Omega$ in Figure~\ref{fig:Q}:
 \begin{align*}
\idealB_{\Omega (1) } \quad &= \quad \langle C D,~ C E,~ A C,~ B C \rangle~, \\
\idealB_{\Omega (12) } \quad &= \quad \langle D,~ C E,~ A C,~ B C \rangle~, \\
\idealB_{\Omega (2) } \quad &= \quad \langle A D,~ B D,~ D E,~ C E,~ A C,~ B C \rangle~, \\
\idealB_{\Omega (23) } \quad &= \quad \langle  A D,~ B D,~ E,~ A C,~ B C \rangle~, \\
\idealB_{\Omega (3) } \quad &= \quad \langle A D,~ B D,~ A E,~ B E,~ A C,~ B C \rangle~.
\end{align*}
For each chamber $\Omega$, the  ideal  $\,{\rm Sat}(\idealM_G, \idealB_{\Omega})\,$
reveals the $\Omega$-relevant siphons.
 We find that $\langle A,B,E \rangle$ is $\Omega (1)$-  and $\Omega (12)$-relevant, 
and that $\langle A,C,E \rangle$ is $\Omega (12)$-,  $\Omega (2)$-, $\Omega (23)$-, and $\Omega (3)$-relevant.  These two siphons define a unique vertex steady state on each invariant polyhedron $\mathcal{P}_{c^{(0)}}$. Note that the vertices $F_{\{A,B,E\}}$ and $F_{\{A,C,E\}}$ coincide for polyhedra along the ray $\Omega(12)$. \qed
\end{example}

The need for efficient algorithms for computing minimal siphons has been emphasized by Angeli {\em et al.} \cite{PetriNetExtended}, who argued that such an algorithm would allow quick verification of the hypotheses of Theorem~\ref{thm:all_not_relevant}.  Cordone {\em et al.} introduced one algorithm for computing minimal siphons in \cite{EnumSiphons}. We advocate Theorem~\ref{thm:Main}
as a new method for computing all minimal siphons, and Algorithm~\ref{algorithm} 
as a direct method for identifying relevant siphons.
Rather than implementing any such algorithm from scratch, it is convenient to
harness existing tools for monomial and binomial primary decomposition 
 \cite{DMM, ES}. We recommend
the widely-used computer algebra system {\tt Macaulay 2} \cite{M2}, 
and the implementations developed by  Kahle \cite{KahleM2} and Roune \cite{Rou}.

\smallskip

In what follows we show some snippets of {\tt Macaulay 2} code,
and we discuss how they are used to compute (relevant) minimal siphons of
small networks. Thereafter we examine two larger examples, which illustrate
the efficiency and speed of  monomial and binomial primary decomposition.
These examples support our view that 
 the algebraic methods of Section 3 are competitive for networks 
 whose size is relevant for research in systems biology.

\begin{example}
The following {\tt Macaulay 2} input uses the command {\tt decompose} to output the minimal primes for the three examples in the Introduction.  \begin{verbatim}
-- Example 1.1
R1 = QQ[A,B,C,D,E]; 
M = ideal(A^2*C, A*D, E, B*C);
decompose(M) 

-- Example 1.2
R2 = QQ[e,i,p,q,r,s];
I = ideal(s*e-q, q-p*e, q*i-r);
decompose (I + ideal product gens R2) 

-- Example 1.3
R3 = QQ[E,F,P,S_0,X,Y];
J = ideal(E*S_0-X, X*(E*P-X), F*P-Y, Y*(F*S_0-Y));
decompose (J + ideal product gens  R3)
\end{verbatim} 
By Theorem~\ref{thm:Main}, the minimal siphons can be read off from the primes. 
\qed
\end{example}

\begin{example}
We return to the chemical reaction network of Examples~\ref{ex:1} and~\ref{ex:Return}.  The following {\tt Macaulay 2} code utilizes item 2 in Algorithm~\ref{algorithm}.
 \begin{verbatim}
-- Example 1.1: c0-relevant siphons 
c0 = {0,0,1,1,0};
R = QQ[A,B,C,D,E, Weights => c0];
IG  = ideal(A^2*C-A*D, A*D-E, E-B*C, A*B*C*D*E);
ICons = ideal(C*D*E-1, A*B^2*D*E^2-1);  
Bc0 = dual radical monomialIdeal leadTerm ICons;
decompose saturate(IG,Bc0)
\end{verbatim} 
In the first line, the vector $c^{(0)}$ was chosen to represent a point in the
chamber $\Omega(1)$, so the output is the unique
$\Omega(1)$-relevant minimal siphon. \qed
\end{example}

The next example is of a large strongly-connected chemical reaction, and the computation shows the 
power of monomial primary decomposition.  

\begin{example}
Consider the  following strongly connected network which is comprised of $s$ species,
$s-1$ complexes, and $s-2$ reversible reactions:
\begin{align*}
c_{1} c_{2} \, \leftrightarrows \, c_{2} c_{3} \, \leftrightarrows 
\, c_{3} c_{4} \, \leftrightarrows 
\, \cdots \, \leftrightarrows \,c_{s-1} c_{s} ~.
\end{align*}
The number of minimal siphons satisfies the recursion $N(s)=N(s-2)+N(s-3)$, where $N(2)=2$, $N(3)=2$, and $N(4)=3$.  For $s=50$ species we obtain $N(50)= 1,221,537 $.
The following  {\tt Macaulay 2} code verifies this:
\begin{verbatim}
s = 50
R = QQ[c_1..c_s];
M = monomialIdeal apply(1..s-1,i->c_i*c_(i+1));
time betti gens dual M
\end{verbatim}
We now explain the commands that are used above.  First, 
${\tt M}$ is the monomial ideal $\idealM_G$ generated by complexes, and {\tt dual} outputs 
its Alexander dual \cite{MS}, which is the monomial ideal whose generators are the products of 
the species-variables in any minimal siphon.  Secondly, {\tt betti}  applied to {\tt gens dual M} outputs the degrees of all the generators of {\tt dual M}; these degrees are exactly the sizes of all minimal siphons.  The command {\tt time} allows us to see that the computation of the minimal siphons takes  
only a few seconds.
  Displayed below is a portion the output of the last command above; the list tells the number of minimal siphons of each possible size.  
\begin{verbatim}
            0       1
o5 = total: 1 1221537
         0: 1       .
         1: .       .
         2: .       .
	        ...	     
        23: .       .
        24: .      26
        25: .    2300
        26: .   42504
        27: .  245157
        28: .  497420
        29: .  352716
        30: .   77520
        31: .    3876
        32: .      18
\end{verbatim}
The current version of {\tt dual} in {\tt Macaulay 2} uses Roune's
implementation of his slice algorithm \cite{Rou}.
For background on the relation of Alexander duality and primary decomposition 
of monomial ideals, see the textbook \cite{MS}.
\qed
\end{example}

Our final example aims to illustrate the computation of minimal siphons for a larger
network with multiple strongly connected components.

\begin{example}
We here consider a chemical reaction network $G$ with $s=25$ species,
 $16$ bidirectional reactions and $32$ complexes.  
 The binomials representing the  $16$ reactions are the  adjacent $2 \times 2$-minors 
 of a $5\times 5$-matrix $(c_{ij})$,  and $\idealJ_G$ is the ideal generated by 
 these $16$  minors  $c_{i,j} c_{i+1,j+1} - c_{i,j+1} c_{i+1,j}$.
 For this network, $L_{\rm stoi}$ is the $16$-dimensional space consisting of
 all matrices whose row sums and column sums are zero, and
 $\mathcal{Q}$ is a $9$-dimensional  convex polyhedral cone, namely the cone
 over the product of simplices $\Delta_4 \times \Delta_4$.
 
   What follows is an extension of the results for adjacent minors
 of a $4 \times 4$-matrix  in \cite[\S 4]{DES}.
 The ideal $\idealJ_G$ is not radical. Using Kahle's software \cite{KahleM2},
 we found that it has $103$ minimal primes,
 of which precisely $26$ contribute minimal siphons
 that are relevant. Up to symmetry, these $26$ siphons fall into four symmetry classes, with representatives given by the following:
 \begin{align*}
Z_1 \quad &= \quad \{ c_{14}, c_{21}, c_{22}, c_{23}, c_{24}, c_{32}, c_{34}, c_{42}, c_{43}, c_{44}, c_{45}, c_{52} \}~, \\	
Z_2 \quad &= \quad \{ c_{14}, c_{21}, c_{22}, c_{23}, c_{24}, c_{33}, c_{34}, c_{35}, c_{41}, c_{42}, c_{43}, c_{53} \}~, \\ 
Z_3 \quad &= \quad \{ c_{14} , c_{24}, c_{31}, c_{32}, c_{33}, c_{34}, c_{42}, c_{43}, c_{44}, c_{45}, c_{52} \}~, \\ 
Z_4 \quad &= \quad \{ c_{14}, c_{24}, c_{31}, c_{32}, c_{33}, c_{34}, c_{43}, c_{44}, c_{45}, c_{53}  \}~. 
 \end{align*}
Under the group $D_8$ of reflections and rotations of the matrix $(c_{ij})$, the orbit of $Z_1$ consists of two siphons, and the orbits of $Z_2$, $Z_3$, and $Z_4$ each are comprised of eight siphons.  The corresponding four types of minimal primes have codimensions 13, 12, 12, and 12, and degrees 1, 2, 3, and 6.  

By randomly generating chambers, we found that, for every integer $r$ between $0$ and $26$,
other than $23$ and $25$,
there is a point $c^{(0)}$ in $\mathcal{Q}$ such that the number of $c^{(0)}$-relevant siphons 
is precisely $r$. We briefly discuss this for three initial conditions.  First, let $c^{(0)}$ be the all-ones matrix. Then  $\mathcal{P}_c^{(0)}$ is the {\em Birkhoff polytope}
which consists of all non-negative $5 \times 5$-matrices with row and column sums equal to five.  In this case, all $26$ minimal siphons are 
$c^{(0)}$-relevant: $Z_1$ defines a vertex, $Z_2$ and $Z_3$ define edges, and $Z_4$ defines a three-dimensional face of $\mathcal{P}_{c^{(0)}}$.
Next, consider the following initial conditions:
\begin{align*}
d^{(0)} ~ = ~ 
\left( 
 \begin{array}{ccccc} 
 1 & 1 & 1 & 1& 1 \\
 1 & 1 & 1 & 1& 1 \\
 1 & 1 & 1-\epsilon & 1& 1 \\
 1 & 1 & 1 & 1& 1 \\
 1 & 1 & 1 & 1& 1 \\
 \end{array}
 \right) \quad \text{and}  \quad\,
e^{(0)} ~ = ~ 
\left( 
 \begin{array}{ccccc} 
 1 & 1 & 1 & 1& 1 \\
 1 & 1 & 1 & 1& 1 \\
 1 & 1 & 1+\epsilon & 1& 1 \\
 1 & 1 & 1 & 1& 1 \\
 1 & 1 & 1 & 1& 1 \\
 \end{array}
 \right) \! ,
\end{align*}
where $\epsilon > 0$.  
Again, all $26$ minimal siphons are $d^{(0)}$-relevant, and
$F_{Z_1}$ is a vertex, $F_{Z_2}$ and $F_{Z_3}$ are edges of $\mathcal{P}_{d^{(0)}}$,
but now  $F_{Z_4}$ is a five-dimensional face.  
Finally, for initial condition $e^{(0)}$, only two minimal siphons
 are $e^{(0)}$-relevant, both in the class of $Z_1$,
 and they define vertices.  
 Thus, using the results of \cite{AndShiu09}, we can conclude
 that the system (\ref{CRN}) is persistent for $e^{(0)}$.
\qed
\end{example}

\section{Conclusion}
In the present paper, we gave a method that computes siphons and determines which of them are relevant.  To our knowledge, this is the first automatic procedure for checking the relevance of a siphon.  
As noted by Angeli {\em et al.} \cite{PetriNetExtended}, such a procedure is desirable for verifying whether large biochemical reaction systems are persistent.  Recall that persistence is the property that no species concentration tends to zero.  In practice, this corresponds to the observed behavior that a substrate that is present at the beginning of an experiment will also be present in some amount for all time.  
There is a class of systems for which the non-relevance of all siphons is a sufficient condition for such a system to be persistent, so our procedure can be used to verify quickly that a large network is persistent.  
  Such a class consists of toric dynamical systems \cite{TDS}.  Mathematically, the claim that toric dynamical systems are persistent is the content of the Global Attractor Conjecture, and we speculate that an algebraic approach to understanding siphons may be a step toward the conjecture.



\subsubsection*{Acknowledgments}  Anne Shiu was supported by a Lucent Technologies Bell Labs Graduate Research Fellowship.  Bernd Sturmfels was partially supported by the National
Science Foundation  (DMS-0456960 and DMS-0757236).  The authors acknowledge the helpful comments of anonymous reviewers, which greatly improved the paper.


\bigskip

\noindent {\bf Authors' addresses:}

\smallskip

\noindent Anne Shiu and Bernd Sturmfels, Dept.~of Mathematics,
University of California, Berkeley, CA 94720, USA,
{\tt \{annejls,bernd\}@math.berkeley.edu}

\end{document}